\numberwithin{equation}{section}
\numberwithin{figure}{section}
  \theoremstyle{plain}
  \newtheorem*{thm*}{\protect\theoremname}
\theoremstyle{plain}
\newtheorem{thm}{\protect\theoremname}
  \theoremstyle{definition}
  \newtheorem{defn}[thm]{\protect\definitionname}
  \theoremstyle{plain}
  \newtheorem{lem}[thm]{\protect\lemmaname}
  \theoremstyle{plain}
  \newtheorem{prop}[thm]{\protect\propositionname}
  \theoremstyle{definition}
  \newtheorem{example}[thm]{\protect\examplename}
  \providecommand{\definitionname}{Definition}
  \providecommand{\examplename}{Example}
  \providecommand{\lemmaname}{Lemma}
  \providecommand{\propositionname}{Proposition}
  \providecommand{\theoremname}{Theorem}
\providecommand{\theoremname}{Theorem}
\begin{document}
\global\long\def\dd{\textup{d}}
\global\long\def\EQ#1#2{\raisebox{.65ex}{#1}/\raisebox{-.65ex}{#2}}

\title{Schlesinger foliation for deformations of foliations.}

\author{Yohann Genzmer}
\begin{abstract}
In this article, we show that for any deformation of analytic foliations,
there exists a maximal analytic singular foliation on the space of
parameters along the leaves of which the deformation is integrable.
\end{abstract}

\maketitle
{\footnotesize{}This work was partially supported by ANR-13-JS01-0002-0.}{\footnotesize \par}

\section{Introduction and statements.}

The Painlevé $VI$ equation \cite{Boalch}, that motivates this work,
is a non-linear differential equation of order two. The leaves of
the induced foliation parametrize the integrable deformations of linear
Fuchsian systems over the four-punctured complex sphere. This a particular
case of the Schlesinger systems that parametrize the integrable deformations
of Garnier systems. In both case, a foliation on the space of parameters
of the deformation whose leaves parametrize the integrable deformations
is described. 

Our purpose is to highlight this phenomenon in a general context for
deformations of regular foliations. 
\begin{thm*}
\label{thm:Let--a}Let $\left(X^{m},\mathcal{F}^{n}\right)\xrightarrow{\pi}B^{p}$
be a proper deformation of analytic regular foliations with $B^{p}$
as space of parameters. Then there exists a unique analytic singular
foliation $\mathcal{H}$ on $B^{p}$ of maximal dimension among those
that \emph{integrates} the deformation. In particular, $\mathcal{H}$
satisfies the following property: for any leaf $L$ of $\mathcal{H},$
the restricted deformation 
\[
\left.\left(X^{m},\mathcal{F}^{n}\right)\right|_{\pi^{-1}\left(L\right)}\xrightarrow{\pi}L
\]
 is integrable.
\end{thm*}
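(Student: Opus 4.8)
The plan is to reduce ``integrability along a direction'' to the vanishing of a single cohomology class --- a foliated analogue of the Kodaira--Spencer map --- and then to realise the maximal foliation $\mathcal H$ as the kernel of that map.

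First I would work with a germ of the deformation at a point $b_{0}\in B$. Using properness of $\pi$, cover the compact fibre $X_{b_{0}}$ by finitely many charts $U_{\alpha}$ with coordinates $(x_{\alpha},y_{\alpha},t)$, $x_{\alpha}\in\mathbb C^{r}$ with $r=m-p-n$, $y_{\alpha}\in\mathbb C^{n}$, $t\in\mathbb C^{p}$, in which $\pi$ is the projection onto $t$ and $\mathcal F$ is straightened to $\{dx_{\alpha}=0,\ dt=0\}$; the transition maps then have the form $x_{\beta}=\phi_{\alpha\beta}(x_{\alpha},t)$, $y_{\beta}=\psi_{\alpha\beta}(x_{\alpha},y_{\alpha},t)$, since the $x_{\alpha}$ are $\mathcal F$-basic. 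A short Frobenius computation shows that a foliation on $\pi^{-1}(D)$ of dimension $n+1$ that contains $\mathcal F$ and integrates the deformation over a one-dimensional disc $D\ni b_{0}$ is necessarily of the shape $\{dx_{\alpha}-\Theta_{\alpha}\,ds=0\}$ with $\Theta_{\alpha}$ independent of $y_{\alpha}$, and that such local data glue precisely when $\Theta_{\beta}=D_{x}\phi_{\alpha\beta}\cdot\Theta_{\alpha}+\partial_{s}\phi_{\alpha\beta}$. Hence integrability along $v\in T_{b_{0}}B$ is equivalent to solving a Cousin problem whose obstruction is the class $\rho(v)=\bigl[\{\partial_{v}\phi_{\alpha\beta}\}\bigr]\in H^{1}\!\bigl(X_{b_{0}},\mathcal N_{\mathcal F_{b_{0}}}\bigr)$, where $\mathcal N_{\mathcal F_{b_{0}}}$ is the sheaf of transverse (foliated) vector fields of $\mathcal F_{b_{0}}$; the map $\rho$ is $\mathbb C$-linear, and is the transverse, relative Kodaira--Spencer map of the foliated deformation.

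The structural point is a reinterpretation of $\rho(v)=0$: running the same \v Cech computation one finds that $\rho(v)=0$ is equivalent to the existence, near $X_{b_{0}}$, of a $\pi$-projectable, $\mathcal F$-foliated vector field $W$ on $X$ with $d\pi(W)=v$. Since $\pi$-projectable foliated vector fields form a sheaf of Lie algebras stable under multiplication by functions pulled back from $B$, and $d\pi$ intertwines brackets, it follows immediately that $\ker\rho$ is an $\mathcal O_{B}$-submodule of the tangent sheaf $TB$ closed under the Lie bracket, i.e.\ an involutive distribution. To make it an honest singular foliation I would assemble the local data into a morphism of $\mathcal O_{B}$-modules $TB\to R^{1}\pi_{*}\mathcal N_{\mathcal F}$, with $\mathcal N_{\mathcal F}$ the sheaf of $\pi$-vertical transverse fields, and let $\mathcal H$ be the saturation of its kernel; coherence of the target, hence of $\ker\rho$ and of $\mathcal H$, comes from properness of $\pi$ via Grauert's direct image theorem --- with the caveat that $\mathcal N_{\mathcal F}$ is coherent only over the subsheaf of $\mathcal F$-basic functions, so one must first present it as a finite extension of genuinely $\mathcal O_{X}$-coherent sheaves (or check the required finiteness by hand from compactness of the fibres). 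The generic rank of $\mathcal H$ is then the maximal dimension asserted in the statement.

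Next I must promote this from directionwise to genuine leafwise integrability. On a germ, over a leaf $L$ of the regular part of $\mathcal H$, lift a frame of $TL$ to $\pi$-projectable foliated fields $\bar W_{1},\dots,\bar W_{q}$ (possible since $TL\subseteq\ker\rho$), and correct them by $\pi$-vertical foliated fields so that the distribution $T\mathcal F+\sum\mathbb C\,\bar W_{i}$ becomes involutive of dimension $n+q$; equivalently, solve order by order in $t$ the coupled ``gluing $+$ flatness'' system for the connection forms $\Theta_{\alpha}(x_{\alpha},t)$. The first-order obstruction is exactly $\rho|_{TL}=0$; the higher obstructions are cup--bracket expressions in the previously constructed terms, and I would argue that they vanish: once $\rho|_{TL}=0$ the first-order term can be chosen $\delta$-exact, which forces exactness of each subsequent bracket, the remaining groups being de Rham cohomology of the contractible germ of $L$ with coefficients in a fixed fibre-cohomology space, killed by the Poincar\'e lemma; convergence of the formal solution and the gluing over the finite cover are then handled by the coherent machinery already in play. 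I expect this bootstrapping --- from the first-order class $\rho$ to actual integrability over leaves, controlling all higher curvature obstructions and analytic convergence --- together with the coherence subtlety for $\mathcal N_{\mathcal F}$, to be the main obstacle. Finally, maximality and uniqueness are formal: if $\mathcal H'$ integrates the deformation and $L'$ is a leaf of its regular part, then for $b\in L'$ and $v\in T_{b}L'$ the integrating foliation over $\pi^{-1}(L')$ restricts, by slicing along a disc tangent to $v$, to one integrating the deformation over that disc, so $\rho(v)=0$; thus $T\mathcal H'\subseteq\ker\rho$ generically and, after saturation, $\mathcal H'\subseteq\mathcal H$. Hence $\mathcal H$ is the greatest element of the poset of integrating foliations, which yields at once its uniqueness and the maximality of its dimension.
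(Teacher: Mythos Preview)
Your strategy is exactly the paper's: define a basic Kodaira--Spencer map $\partial\mathcal F^n:\Theta_B\to R^1\pi_*(\,\cdot\,)$, take $\mathcal H=\ker\partial\mathcal F^n$, check it is coherent and involutive, and then verify that $\mathcal H$ actually integrates the deformation. Two points where the paper is sharper than your outline are worth noting.

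\emph{Coherence.} You flag that $\mathcal N_{\mathcal F}$ is only a module over the basic functions and propose to ``present it as a finite extension of genuinely $\mathcal O_X$-coherent sheaves''. The paper's fix is cleaner: it works with $\bigl(\mathcal B^{0}/\mathcal F^{n}\bigr)\otimes\mathcal C^{\omega}(X)$, which is locally free over $\mathcal O_X$ (in the Frobenius charts it is just $\bigoplus\mathcal O_X\,\partial_{x_i}$ for the transverse coordinates). Grauert then applies directly, and the kernel of the resulting $\mathcal O_B$-morphism is coherent with no extra work. You do not need a saturation step.

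\emph{Leafwise integrability.} You anticipate a tower of higher curvature obstructions and a formal/convergent argument to pass from one direction to a $d$-dimensional leaf. The paper does none of this. At a regular point of $\mathcal H$, it picks generators $w_1,\dots,w_d$ of $\mathcal H$, lifts each (using $\partial\mathcal F^n(w_k)=0$) to basic projectable fields $v_{i,k}$ on a cover $\{U_i\}$ with $v_{i,k}-v_{j,k}\in\mathcal F^n$, and then simply asserts that on each $U_i$ the family $\{\partial_{x_{m-p-n+1}},\dots,\partial_{x_{m-p}},v_{i,1},\dots,v_{i,d}\}$ is involutive and that the resulting local foliations $\mathcal G_i$ glue because they depend only on the classes of the $v_{i,k}$ modulo $\mathcal F^n$. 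No order-by-order bootstrapping, no Poincar\'e lemma, no convergence issue. Your caution about the cross-brackets $[v_{i,k},v_{i,l}]$ is not unreasonable --- the paper's justification at that step is terse --- but the elaborate obstruction machinery you sketch is not the intended route, and the pieces you label ``the main obstacle'' do not appear in the paper's argument at all.

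Maximality and uniqueness you have exactly as in the paper: any integrating $\mathcal H'$ has $T\mathcal H'\subset\ker\partial\mathcal F^n$ by the one-dimensional equivalence, so $\mathcal H'\subset\mathcal H$.
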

The definition of \emph{a foliation integrating} a given deformation
will appear below. 

If we remove the maximality property, then the theorem becomes trivial.
Indeed, the foliation of $B^{p}$ by points satisfies its conclusion.
Moreover, for a generic deformation, the foliation $\mathcal{H}$
produced by the result above will be indeed the foliation by points.
Nevertheless, in view of the example mentioned in the introduction,
the foliation $\mathcal{H}$ deserves to be called \emph{the Schlesinger
foliation} of the deformation. Finally, the main theorem holds in
the real analytic class as well as in the complex one. 

It might be possible that along some exceptional curves transverse
to $\mathcal{H}$, the deformation is also integrable. Such a curve
has to be more or less \emph{isolated}: they cannot foliate the manifold
$B^{p}$ even locally. For instance, in the framework of the theory
of Fuchsian systems, consider the six parameters family defined by
\begin{equation}
\frac{\dd}{\dd z}=\frac{A_{1}}{z-u_{1}}+\frac{A_{2}}{z-u_{2}}+\frac{A_{3}}{z-u_{3}}\label{eq:11}
\end{equation}
where $A_{i}=\left(\begin{array}{cc}
0 & a_{i}\\
0 & 0
\end{array}\right)$. Since the matrices $A_{i}$ commute, the Schlesinger system \cite{Schlesinger}
reduces to 
\[
\frac{\partial A_{i}}{\partial u_{j}}=0,\qquad i,j=1,\ 2,\ 3.
\]
Thus, the Schlesinger foliation of this deformation is given by $\dd a_{1}=\dd a_{2}=\dd a_{3}=0.$
However, if $a_{1}=a_{2}=0$ and $a_{3}\neq0$, then (\ref{eq:11})
degenerates toward a system which is conjugated to 
\[
\frac{\dd}{\dd z}=\frac{\left(\begin{array}{cc}
0 & 1\\
0 & 0
\end{array}\right)}{z-u_{3}}.
\]
Therefore, at any point $\left(0,0,a_{3}\right)$ with $a_{3}\neq0$,
the family (\ref{eq:11}) is still integrable along the whole submanifold
$a_{1}=a_{2}=0$, which is not a leaf of the Schlesinger foliation.
Hence, in general, the leaf of $\mathcal{H}$ does not parametrize
the maximal submanifold along which the deformation is integrable.

The main theorem can be compared to the following result of Kiso \cite{Kiso}:
let $L$ be a Lie algebra of holomorphic vector fields on a complex
manifold $N\times M$ tangent to the projection on $M$. If $L$ is
simple and of finite dimension then there exists a maximal foliation
on $N$ along the leaves of which $L$ is integrable. In our context,
the Lie algebra underlying the foliation $\mathcal{F}^{n}$ is in
general not of finite dimension. Moreover, the fibration $\pi$ is
not trivial: the manifold supporting the foliation can also be deformed.

\bigskip{}

\section{Deformation of foliations and Kodaira-Spencer map.}

\subsection{Deformations of foliations. }

Few definitions below concern foliations with maybe a singular locus,
but the main theorem is stated only for regular deformations of regular
foliations. 

Let $X^{m}$ be an analytic manifold and $\Theta_{X^{m}}$ its tangent
sheaf. Let $E$ be a coherent subsheaf of $\Theta_{X^{m}}.$ The stalk
of $E$ at any point $p$ is finitely generated as $\mathcal{C}^{\omega}\left(X^{m}\right)_{p}$-module
- the analytic functions on $B^{p}$ - and we denote by $\textup{rank}_{p}\left(E\right)$
the minimal number of elements of a generating family. The \emph{singular
locus} of $E$ is defined by 
\[
\textup{Sing}\left(E\right)=\left\{ \left.p\in X^{m}\right|\dim_{\mathbb{R}}\left\{ \left.v\left(p\right)\right|v\in E_{p}\right\} <\textup{rank}_{p}\left(E\right)\right\} 
\]

Since $E$ is coherent, the singular locus is an analytic subset of
$X^{m}$ \cite{Mitera}. The sheaf is said \emph{regular} if $\textup{Sing}\left(E\right)=\emptyset$.
The integer 
\[
d=\max_{p\in X^{m}}\dim_{\mathbb{R}}\left\{ \left.v\left(p\right)\right|v\in E_{p}\right\} \leq m.
\]
is called the dimension of $E$ and $m-d$ its codimension. 
\begin{defn}
An analytic foliation on $X^{m}$ is a coherent subsheaf $\mathcal{F}$
of $\Theta_{X^{m}}$ which is \emph{integrable}, i.e, 
\[
\left[\mathcal{F}_{p},\mathcal{F}_{p}\right]\subset\mathcal{F}_{p}\quad\textup{for any }p\in X^{m}\setminus\textup{Sing}\left(\mathcal{F}\right).
\]
\end{defn}
\begin{defn}
A deformation of regular foliation, denoted by $\left(X^{m},\mathcal{F}^{n}\right)\xrightarrow{\pi}B^{p}$,
is the data of a proper submersion $\pi:X^{m}\to B^{p}$, where $X^{m}$
and $B^{p}$ are smooth manifold and of a regular foliation $\mathcal{F}^{n}$
tangent to the fibers of $\pi,$ i.e., $\mathcal{F}^{n}\subset\ker d\pi$.
The integer $m$ is the dimension of $X^{m}$, $p$ the dimension
of $B^{p}$ and $n$ the dimension of $\mathcal{F}^{n}$.
\end{defn}
The following lemma is a direct consequence of the Frobénius \cite{MR552968}
theorem and states that a deformation of foliation is locally trivial
in the total space.
\begin{lem}
\label{lem:Frob}Let $\left(X^{m},\mathcal{F}^{n}\right)\xrightarrow{\pi}B^{p}$
be a deformation of foliation and $x\in X^{m}.$ There exists an isomorphism
$\phi$ of deformations of foliation defined on an open neighborhood
$U\ni x$ such that the following diagram commutes 
\[
\xymatrix{\left(U,\left.\mathcal{F}^{n}\right|_{U}\right)\ar[rr]^{\phi\quad\qquad\quad}\ar[rd]^{\pi} &  & \left(\mathbb{R}^{m-p}\times B^{p},L^{n}\right)\ar[ld]^{\textup{pr}_{2}}\\
 & \pi\left(U\right)
}
.
\]
In the second term of the above commutative diagram, the foliation
$L^{n}$ of $\mathbb{R}^{m-p}\times B^{p}$ is given by the fibers
of the projection 
\[
\Pi:\left\{ \begin{array}{ccc}
\mathbb{R}^{m-p}\times B^{p} & \to & \mathbb{R}^{m-p-n}\times B^{p}\\
\left(x_{1},\cdots,x_{m-p},\tau\right) & \to & \left(x_{1},\cdots,x_{m-p-n},\tau\right)
\end{array}\right..
\]

\begin{figure}[H]
\centering{}\includegraphics[scale=0.5]{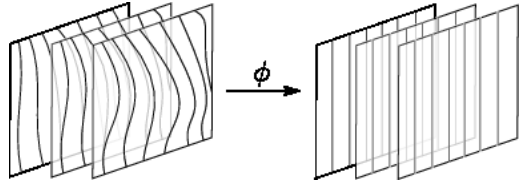}\caption{\label{fig:Local-trivialization-of}Local trivialization of a deformation
of foliation.}
\end{figure}
\end{lem}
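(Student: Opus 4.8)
The plan is to reduce the statement to the classical Frobenius theorem applied fibrewise, the point being that the straightening diffeomorphism it produces automatically commutes with $\pi$ because it is assembled from flows of vector fields that are tangent to the fibres of $\pi$.

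First, since $\pi$ is a submersion, the implicit function theorem provides analytic coordinates $\left(y_{1},\dots,y_{m-p},\tau_{1},\dots,\tau_{p}\right)$ on a neighborhood of $x$ in which $\pi$ is the projection $\left(y,\tau\right)\mapsto\tau$; in particular $\ker d\pi=\left\langle \partial/\partial y_{1},\dots,\partial/\partial y_{m-p}\right\rangle $. Because $\mathcal{F}^{n}$ is regular of dimension $n$, it is locally free of rank $n$, hence generated near $x$ by $n$ vector fields $v_{1},\dots,v_{n}$ that are pointwise independent; being sections of $\ker d\pi$, each $v_{i}$ involves only the $\partial/\partial y_{j}$. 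The $n\times\left(m-p\right)$ matrix of their coefficients has an invertible $n\times n$ minor at $x$; after permuting the $y$-coordinates and replacing $\left(v_{1},\dots,v_{n}\right)$ by its product with the inverse of that minor — an invertible matrix of analytic functions near $x$ — we may assume
\[
v_{i}=\frac{\partial}{\partial y_{m-p-n+i}}+\sum_{l=1}^{m-p-n}b_{il}\left(y,\tau\right)\frac{\partial}{\partial y_{l}},\qquad i=1,\dots,n.
\]

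Next I would use integrability to upgrade this into a commuting frame. As $\mathcal{F}^{n}$ is a foliation, $\left[v_{i},v_{j}\right]\in\mathcal{F}^{n}=\left\langle v_{1},\dots,v_{n}\right\rangle $; but the normal form above shows that $\left[v_{i},v_{j}\right]$ has vanishing component along each $\partial/\partial y_{m-p-n+k}$, while the only element of $\left\langle v_{1},\dots,v_{n}\right\rangle $ with that property is $0$. Hence the $v_{i}$ pairwise commute, so their local flows $\Phi_{t}^{v_{i}}$ commute. Let $S=\left\{ y_{m-p-n+1}=\dots=y_{m-p}=0\right\} $, a submanifold through $x$ of dimension $m-n$ transverse to $\mathcal{F}^{n}$ (transversality is immediate from the normal form) carrying the coordinates $\left(y_{1},\dots,y_{m-p-n},\tau\right)$. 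Then
\[
\left(x_{1},\dots,x_{m-p-n},t_{1},\dots,t_{n};\tau\right)\longmapsto\Phi_{t_{1}}^{v_{1}}\circ\dots\circ\Phi_{t_{n}}^{v_{n}}\left(x_{1},\dots,x_{m-p-n},0,\dots,0;\tau\right)
\]
is, by the usual proof of Frob\'enius, an analytic diffeomorphism of a neighborhood of the origin onto a neighborhood $U$ of $x$ carrying $\left\langle \partial/\partial t_{1},\dots,\partial/\partial t_{n}\right\rangle $ onto $\mathcal{F}^{n}$. Writing $\left(x_{1},\dots,x_{m-p}\right)$ for $\left(x_{1},\dots,x_{m-p-n},t_{1},\dots,t_{n}\right)$, the foliation $\mathcal{F}^{n}$ becomes the span of $\partial/\partial x_{m-p-n+1},\dots,\partial/\partial x_{m-p}$, which is precisely the foliation $L^{n}$ by the fibres of $\Pi$.

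It remains to check that the inverse $\phi$ of this diffeomorphism is an isomorphism of deformations, i.e. that $\textup{pr}_{2}\circ\phi=\pi$. This is where tangency to the fibres is used: each $v_{i}$ lies in $\ker d\pi$, so every flow $\Phi_{t}^{v_{i}}$ preserves the fibres of $\pi$; since $S$ was parametrised so that $\pi|_{S}$ is the projection onto $\tau$, the coordinate $\tau$ is left unchanged, whence $\textup{pr}_{2}\circ\phi=\pi$. I do not anticipate a genuine obstacle here: the only points requiring care are the index bookkeeping, the verification that each construction — submersion coordinates, inversion of the minor, flows of analytic vector fields — stays within the analytic category, and the observation that properness of $\pi$ is irrelevant to this purely local statement and is simply inherited from the hypotheses.
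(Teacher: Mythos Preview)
Your argument is correct and is precisely the standard unpacking of what the paper means by ``a direct consequence of the Frob\'enius theorem'': the paper gives no proof beyond that phrase, and your approach---first take submersion coordinates for $\pi$, then normalise an $\mathcal{F}^{n}$-frame inside $\ker d\pi$, observe that integrability forces the frame to commute, and straighten via the flows, which preserve the fibres because the $v_{i}$ are vertical---is exactly how one makes that phrase rigorous. There is nothing to compare: you have simply supplied the details the paper omits.
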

\begin{defn}
\label{Let--a}Let $\left(X^{m},\mathcal{F}^{n}\right)\xrightarrow{\pi}B^{p}$
be a deformation of foliations. A foliation $\mathcal{H}$ of $B^{p}$
is said to \emph{integrate} the deformation if, for any point $\tau\in B^{p}\setminus\textup{Sing}\left(\mathcal{H}\right)$,
there exists a neighborhood $U\ni\tau$ and a regular foliation $\mathcal{G}$
in $\pi^{-1}\left(U\right)$ such that $\dim\mathcal{G}=n+\dim\mathcal{H}$
where $n$ is the dimension of $\mathcal{F}^{n}$ and 
\[
\left.\mathcal{F}^{n}\right|_{\pi^{-1}\left(U\right)}\subset\mathcal{G}\subset\pi^{*}\left.\mathcal{H}\right|_{U}.
\]

The deformation is said to be \emph{completely integrable} if and
only if the trivial foliation $\mathcal{H}=B^{p}$ integrates the
deformation.
\end{defn}
In particular, for any leaf $L$ of $\mathcal{H},$ the restricted
deformation 
\[
\pi:\left(\pi^{-1}\left(L\right),\left.\mathcal{F}^{n}\right|_{\pi^{-1}\left(L\right)}\right)\to L
\]
is completely integrable. In Figure (\ref{fig:Local-trivialization-of}),
the deformation is locally completely integrable: the foliation $\mathcal{H}$
has one leaf and the foliation $\mathcal{G}$ of the definition above
is given in the coordinates of Lemma \ref{lem:Frob} by the fibers
of the projection 

\[
p:\left\{ \begin{array}{ccc}
\mathbb{R}^{m-p}\times B^{p} & \to & \mathbb{R}^{m-p-n}\\
\left(x_{1},\cdots,x_{m-p},\tau\right) & \to & \left(x_{1},\cdots,x_{m-p-n}\right)
\end{array}\right..
\]

Our goal is to show the existence of a unique maximal integrating
foliation. 

\subsection{The sheaf of basic vector fields.}
\begin{defn}
A vector field $v$ is said to be basic for $\mathcal{F}$ if and
only if 
\[
\left[v,\mathcal{F}\right]\subset\mathcal{F}.
\]
\end{defn}
\begin{defn}
A vector field $v$ is said to be projectable if and only if there
exists a of vector field $w$ on $B^{p}$ such that the following
diagram commutes
\[
\xymatrix{X^{m}\ar[r]^{\pi}\ar[d]^{v} & B^{p}\ar[d]^{w}\\
TX^{m}\ar[r]^{d\pi} & TB^{p}
}
.
\]

It is said to be vertical if $w=0,$ i.e., $d\pi\left(v\right)=0.$ 
\end{defn}
We denote by $\mathcal{B}^{\pi}$ the sheaf of basic and projectable
vector fields and $\mathcal{B}^{0}$ the subsheaf of basic and vertical
vector fields. The sheaf $\mathcal{B}^{0}$ is a sheaf of modules
over the ring of local first integrals of $\mathcal{F}^{n}.$ In the
coordinates given by Lemma \ref{lem:Frob}, a section of the quotient
sheaf $\EQ{\mbox{\ensuremath{\mathcal{B}^{0}}}}{\mbox{\ensuremath{\mathcal{F}^{n}}}}$
is written 
\[
\sum_{i=1}^{m-p-n}a\left(x_{1},\ldots,x_{m-p-n},\tau\right)\frac{\partial}{\partial x_{i}}.
\]
Hence, $\EQ{\mbox{\ensuremath{\mathcal{B}^{0}}}}{\mbox{\ensuremath{\mathcal{F}^{n}}}}$
is a free module. It will be more convenient to work with a $\mathcal{C}^{\omega}\left(X^{m}\right)$-module,
which is why, we will consider the product $\EQ{\mbox{\ensuremath{\mathcal{B}^{0}}}}{\mbox{\ensuremath{\mathcal{F}^{n}}}}\otimes\mathcal{C}^{\omega}\left(X^{m}\right).$
It is also a locally free sheaf over $\mathcal{C}^{\omega}\left(X^{m}\right)$. 

\subsection{The basic Kodaira-Spencer map. }

Let $w$ be a vector field on $B^{p}$ defined on a small open set
$W$ of $B^{p}$. According to Lemma \ref{lem:Frob}, there exists
a covering $\left\{ U_{i}\right\} _{i\in I}$ of a neighborhood in
$X^{n}$ of $\pi^{-1}\left(W\right)$ such that the deformation is
trivialized on any $U_{i}$ by some conjugacy $\phi_{i}.$ For any
$U_{i},$ the vector field $v_{i}=d\phi_{i}^{-1}\left(0,w\right)$
is a section of $\mathcal{B}^{\pi}$ on $U_{i}$ that projects on
$w.$ Considering the image of the family $\left\{ v_{i}\otimes1-v_{j}\otimes1\right\} _{ij}$
in $H^{1}\left(\pi^{-1}\left(W\right),\EQ{\mbox{\ensuremath{\mathcal{B}^{0}}}}{\mbox{\ensuremath{\mathcal{F}^{n}}}}\otimes\mathcal{C}^{\omega}\left(X^{m}\right)\right)$
, we obtain a $\mathcal{C}^{\omega}\left(B^{p}\right)-$morphism of
sheaves 
\[
\partial\mathcal{F}^{n}:\Theta_{B^{p}}\to R^{1}\pi_{*}\left(\EQ{\mbox{\ensuremath{\mathcal{B}^{0}}}}{\mbox{\ensuremath{\mathcal{F}^{n}}}}\otimes\mathcal{C}^{\omega}\left(X^{m}\right)\right)
\]

which is called the \emph{basic Kodaira-Spencer map} of the deformation.
Notice that this is not the standard Kodaira-Spencer map as defined
in \cite{Kod3} since, it does not measure the infinitesimal directions
of triviality, but the infinitesimal directions of integrability. 
\begin{lem}
\label{lem:-is-a}$\ker\partial\mathcal{F}^{n}$ is a foliation of
$B^{p}.$ 
\end{lem}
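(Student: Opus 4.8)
The plan is to show that $\ker\partial\mathcal{F}^{n}$ is closed under the Lie bracket, since it is already a coherent subsheaf of $\Theta_{B^{p}}$ (being the kernel of a morphism of coherent sheaves), which gives us a foliation in the sense of the first definition. The key point is to interpret the vanishing $\partial\mathcal{F}^{n}(w)=0$ concretely: by the construction preceding the statement, $\partial\mathcal{F}^{n}(w)=0$ on an open set $W$ means that the \v{C}ech cocycle $\{v_{i}\otimes 1-v_{j}\otimes 1\}$ is a coboundary in $H^{1}(\pi^{-1}(W),\EQ{\mathcal{B}^{0}}{\mathcal{F}^{n}}\otimes\mathcal{C}^{\omega}(X^{m}))$, and after refining the cover and adding vertical basic corrections, this is equivalent to the existence of a \emph{global} section $v$ of $\mathcal{B}^{\pi}$ over $\pi^{-1}(W)$ that projects onto $w$. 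In other words, $w\in\ker\partial\mathcal{F}^{n}$ locally if and only if $w$ lifts to a global basic projectable vector field $v$ on the total space; I would first record this reformulation as a preliminary observation, taking care that the lift $v$ is only well defined modulo global vertical basic fields but that this ambiguity does not matter for what follows.

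Next I would take two local sections $w_{1},w_{2}$ of $\ker\partial\mathcal{F}^{n}$ over a common open set $W$, and lift them to global basic projectable fields $v_{1},v_{2}$ on $\pi^{-1}(W)$ with $d\pi(v_{k})=w_{k}$. The bracket $[v_{1},v_{2}]$ is then a vector field on $\pi^{-1}(W)$, it is projectable because $d\pi[v_{1},v_{2}]=[d\pi(v_{1}),d\pi(v_{2})]=[w_{1},w_{2}]$ (projectability of brackets of projectable fields is standard), and it is basic because the basic vector fields form a Lie subalgebra of $\Theta_{X^{m}}$: from $[v_{k},\mathcal{F}^{n}]\subset\mathcal{F}^{n}$ and the Jacobi identity one gets $[[v_{1},v_{2}],\mathcal{F}^{n}]\subset[v_{1},[v_{2},\mathcal{F}^{n}]]+[v_{2},[\mathcal{F}^{n},v_{1}]]\subset[v_{1},\mathcal{F}^{n}]+[v_{2},\mathcal{F}^{n}]\subset\mathcal{F}^{n}$. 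Hence $[v_{1},v_{2}]$ is a global basic projectable lift of $[w_{1},w_{2}]$, so by the reformulation of the first step, $[w_{1},w_{2}]\in\ker\partial\mathcal{F}^{n}$ on $W$.

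Finally I would note that $\ker\partial\mathcal{F}^{n}$, as the kernel of a $\mathcal{C}^{\omega}(B^{p})$-morphism between coherent sheaves, is itself coherent, hence a coherent subsheaf of $\Theta_{B^{p}}$ that is closed under the bracket, which is precisely the definition of an analytic (possibly singular) foliation of $B^{p}$. The step I expect to require the most care is the reformulation in the first paragraph: one must check that the passage from the \v{C}ech coboundary condition to the existence of a \emph{single} global lift $v$ is legitimate — i.e., that subtracting off the \v{C}ech $0$-cochain realizing the coboundary from the local fields $v_{i}$ produces fields that patch, and that these patched fields, being modified only by sections of $\EQ{\mathcal{B}^{0}}{\mathcal{F}^{n}}$, remain basic and projectable with the correct projection $w$. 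Everything after that is the routine Lie-algebra bookkeeping indicated above.
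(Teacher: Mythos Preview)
Your argument for coherence is fine, and the Jacobi-identity computation that basic projectable fields form a Lie subalgebra is correct and is exactly what is needed. The problem lies in the reformulation you flag yourself: the assertion that $\partial\mathcal{F}^{n}(w)=0$ is equivalent to the existence of a \emph{global} section $v\in\mathcal{B}^{\pi}(\pi^{-1}(W))$ projecting onto $w$ is too strong and in general false.

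Vanishing of $\partial\mathcal{F}^{n}(w)$ only says that the cocycle $\{v_{i}-v_{j}\}$ becomes a coboundary in $\EQ{\mathcal{B}^{0}}{\mathcal{F}^{n}}\otimes\mathcal{C}^{\omega}(X^{m})$. After subtracting the cobounding $0$-cochain you obtain local basic lifts $v_{i}$ with $v_{i}-v_{j}\in\mathcal{F}^{n}$, but to glue these into a single vector field you would still have to kill the residual cocycle $\{v_{i}-v_{j}\}$ \emph{inside} $\mathcal{F}^{n}$, i.e.\ you would need its class in $H^{1}(\pi^{-1}(W),\mathcal{F}^{n})$ to vanish. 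Since the fibres of $\pi$ are compact, there is no reason for this $H^{1}$ to be trivial, so a global basic lift need not exist. Put differently, from the short exact sequence $0\to\mathcal{B}^{0}\to\mathcal{B}^{\pi}\to\pi^{-1}\Theta_{B^{p}}\to 0$ one sees that the image of $\pi_{*}\mathcal{B}^{\pi}\to\Theta_{B^{p}}$ equals the kernel of the connecting map into $R^{1}\pi_{*}\mathcal{B}^{0}$, whereas $\ker\partial\mathcal{F}^{n}$ is the kernel of the composite into $R^{1}\pi_{*}\big(\EQ{\mathcal{B}^{0}}{\mathcal{F}^{n}}\otimes\mathcal{C}^{\omega}(X^{m})\big)$; the latter can be strictly larger.

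The remedy is exactly what the paper does: keep the \emph{local} basic lifts $v_{i}^{\epsilon}$ with $v_{i}^{\epsilon}-v_{j}^{\epsilon}=t_{ij}^{\epsilon}\in\mathcal{F}^{n}$ and compute directly at the cocycle level
\[
[v_{i}^{1},v_{i}^{2}]-[v_{j}^{1},v_{j}^{2}]=[v_{j}^{1},t_{ij}^{2}]-[v_{i}^{2},t_{ij}^{1}]+[t_{ij}^{1},t_{ij}^{2}]\in\mathcal{F}^{n},
\]
using only $[\mathcal{B}^{\pi},\mathcal{F}^{n}]\subset\mathcal{F}^{n}$. Equivalently, one may say that the $v_{i}^{\epsilon}$ patch to a global section of $\EQ{\mathcal{B}^{\pi}}{\mathcal{F}^{n}}$, and the Lie bracket descends to this quotient; your Jacobi argument then goes through verbatim. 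So your strategy is essentially right, but the intermediate object should be a section of $\EQ{\mathcal{B}^{\pi}}{\mathcal{F}^{n}}$, not of $\mathcal{B}^{\pi}$ itself.
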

\begin{proof}
The sheaf $\EQ{\mbox{\ensuremath{\mathcal{B}^{0}}}}{\mbox{\ensuremath{\mathcal{F}^{n}}}}\otimes\mathcal{C}^{\omega}\left(X^{m}\right)$
is a coherent sheaf of $\mathcal{C}^{\omega}\left(X^{m}\right)$-modules.
Since $\pi$ is proper, $R^{1}\pi_{*}\left(\EQ{\mbox{\ensuremath{\mathcal{B}^{0}}}}{\mbox{\ensuremath{\mathcal{F}^{n}}}}\otimes\mathcal{C}^{\omega}\left(X^{m}\right)\right)$
is a coherent sheaf of $\mathcal{C}^{\omega}\left(X^{m}\right)_{B^{p}}-$modules
, and so is $\ker\partial\mathcal{F}^{n}$ \cite{grauertremmert}
. Suppose $w_{1}$ and $w_{2}$ belong to the kernel of $\partial\mathcal{F}^{n}$.
By construction, there exist two families $\left\{ v_{i}^{\epsilon}\right\} _{i}$
, $\epsilon=1,2$ of sections of $\mathcal{B}^{\pi}$ such that for
any $i$ and $\epsilon$, the vector field $v_{i}^{\epsilon}$ projects
on $w_{\epsilon}$ and such that for any $i,\ j$ and $\epsilon$,
\[
v_{i}^{\epsilon}-v_{j}^{\epsilon}=t_{ij}^{\epsilon}\in\mathcal{F}^{n}.
\]

The difference of their Lie brackets is written
\begin{eqnarray*}
\left[v_{i}^{1},v_{i}^{2}\right]-\left[v_{j}^{1},v_{j}^{2}\right] & = & \left[v_{j}^{1},t_{ij}^{2}\right]-\left[v_{i}^{2},t_{ij}^{1}\right]+\left[t_{ij}^{1},t_{ij}^{2}\right]
\end{eqnarray*}
Since $\left[\mathcal{B}^{\pi},\mathcal{F}^{n}\right]\subset\mathcal{F}^{n}$
and since the map $d\pi$ commutes with the Lie bracket, one has 
\[
\partial\mathcal{F}^{n}\left(\left[w_{1},w_{2}\right]\right)=\overline{\left\{ \left[v_{i}^{1},v_{i}^{2}\right]\otimes1-\left[v_{j}^{1},v_{j}^{2}\right]\otimes1\right\} _{i,j}}=0.
\]
 So $\ker\partial\mathcal{F}^{n}$ is integrable and thus is a foliation. 
\end{proof}

\section{Integrating foliations of deformations. }

Consider the trivial projection $\mathbb{R}^{m-p}\times\mathbb{R}^{p}\to\mathbb{R}^{p}$
where the source is foliated by $L^{n}$ given by the fibers of 
\[
\left\{ \begin{array}{ccc}
\mathbb{R}^{m-p}\times\mathbb{R}^{p} & \to & \mathbb{R}^{m-p-n}\times\mathbb{R}^{p}\\
\left(x_{1},\cdots,x_{m-p},\tau\right) & \to & \left(x_{1},\cdots,x_{m-p-n},\tau\right).
\end{array}\right.
\]
Let $\left\{ T_{i}\right\} _{i=1,\cdots,l}$ be an involutive family
of germs of vector fields inducing a germ of regular foliation of
dimension $l$ in $\left(\mathbb{R}^{p},\tau\right)$. The family
\[
\left\{ \partial_{x_{m-p-n+1}},\cdots,\ \partial_{x_{m-p}},T_{1},\cdots,T_{l}\right\} 
\]
where $T_{i}$ is seen as vector field in $\mathbb{R}^{m-p}\times\mathbb{R}^{p}$
is involutive and defines an integrating dimension $n+l$ foliation
$\mathcal{G}$. Notice that $\mathcal{G}$ is not unique but does
depend only on the \emph{basic part }of $T_{i}$: for any family of
vector fields $\left\{ X_{i}\right\} _{i=1..l}$ tangent to $\left\{ \partial_{x_{m-p-n+1}},\cdots,\ \partial_{x_{m-p}}\right\} $,
the distribution 
\[
\left\{ \partial_{x_{m-p-n+1}},\cdots,\ \partial_{x_{m-p}},T_{1}+X_{1},\cdots,T_{l}+X_{l}\right\} 
\]
induces the same foliation $\mathcal{G}$. This remark is the key
of the proof of the proposition below. 
\begin{prop}
\label{prop:Let--a}Let $\left(X^{m},\mathcal{F}^{n}\right)\xrightarrow{\pi}B^{p}$
be a deformation of foliation and $w$ be a vector field in $B^{p}$
defined near $\tau$ with $w\left(\tau\right)\neq0.$ The two following
properties are equivalent:

\begin{enumerate}
\item the foliation induced by $w$ integrates the deformation. 
\item $\partial\mathcal{F}^{n}\left(w\right)=0$.
\end{enumerate}
\end{prop}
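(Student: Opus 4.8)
The plan is to reduce the equivalence to a statement about the Čech cocycle defining $\partial\mathcal{F}^{n}(w)$. Because $w(\tau)\neq0$, on a small neighbourhood $U$ of $\tau$ the foliation $\mathcal{H}$ induced by $w$ is regular of dimension one with $\tau\notin\operatorname{Sing}(\mathcal{H})$, and by Definition \ref{Let--a} the property ``$\mathcal{H}$ integrates the deformation'' only has to be tested near $\tau$, so we may shrink $U$ at will. First I would describe an integrating foliation $\mathcal{G}$ near $\tau$: since $w(\tau)\neq0$, such a $\mathcal{G}$ projects onto $\mathcal{H}$ and hence $\mathcal{G}\cap\ker d\pi=\mathcal{F}^{n}$, so after rescaling by a non-vanishing function pulled back from $U$ one has $\mathcal{G}=\mathcal{F}^{n}+\mathcal{C}^{\omega}(X^{m})\cdot v$ with $d\pi(v)=w$. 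As $d\pi$ is compatible with the Lie bracket and $[v,\mathcal{F}^{n}]$ is vertical while $v$ is transverse to $\ker d\pi$, the condition $[\mathcal{G},\mathcal{G}]\subset\mathcal{G}$ reduces to $[v,\mathcal{F}^{n}]\subset\mathcal{F}^{n}$, i.e. to $v$ being basic. Over the covering $\{U_{i}\}$ this means: $\mathcal{H}$ integrates the deformation near $\tau$ if and only if there exist basic lifts $v_{i}\in\mathcal{B}^{\pi}(U_{i})$ of $w$ agreeing modulo $\mathcal{F}^{n}$ on overlaps — for then the remark preceding the proposition glues the $\mathcal{F}^{n}+\mathcal{C}^{\omega}(X^{m})\cdot v_{i}$ into a single foliation $\mathcal{G}$ on $\pi^{-1}(U)$. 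Comparing with the standard lifts $v_{i}^{0}=d\phi_{i}^{-1}(0,w)$ entering the construction of $\partial\mathcal{F}^{n}$, whose pairwise differences lie in $\mathcal{B}^{0}$, this is in turn equivalent to the vanishing of the class of $\{(v_{i}^{0}-v_{j}^{0})\bmod\mathcal{F}^{n}\}$ in $H^{1}(\pi^{-1}(U),\mathcal{B}^{0}/\mathcal{F}^{n})$.

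With this in hand, $(1)\Rightarrow(2)$ is immediate: an integrating $\mathcal{G}$ yields, as above, basic lifts $v_{i}$ agreeing modulo $\mathcal{F}^{n}$, so $\{v_{i}^{0}-v_{j}^{0}\}=\delta\{v_{i}^{0}-v_{i}\}$ is a coboundary already in $\mathcal{B}^{0}$, a fortiori in $\mathcal{B}^{0}/\mathcal{F}^{n}$; applying $-\otimes\mathcal{C}^{\omega}(X^{m})$ and passing to cohomology gives $\partial\mathcal{F}^{n}(w)|_{U}=0$, hence $\partial\mathcal{F}^{n}(w)=0$.

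The delicate implication is $(2)\Rightarrow(1)$. Here $\partial\mathcal{F}^{n}(w)=0$ only tells us that the class of $\{v_{i}^{0}-v_{j}^{0}\}$ becomes a coboundary after applying the functor $-\otimes_{\mathcal{I}}\mathcal{C}^{\omega}(X^{m})$, where $\mathcal{I}$ is the sheaf of first integrals of $\mathcal{F}^{n}$; concretely, in the coordinates of Lemma \ref{lem:Frob} this reads $(v_{i}^{0}-v_{j}^{0})\bmod\mathcal{F}^{n}=\sum_{k}(g_{k}^{i}-g_{k}^{j})\,\partial_{x_{k}}$ with \emph{arbitrary} functions $g_{k}^{i}$, whereas the left-hand side has \emph{first-integral} coefficients. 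Subtracting $\sum_{k}g_{k}^{i}\partial_{x_{k}}$ from $v_{i}^{0}$ does make the lifts agree modulo $\mathcal{F}^{n}$, but destroys the basic property, so the distribution they span is not integrable — this is exactly why a naive correction fails. Hence the real task is to show that the natural map
\[
H^{1}\!\left(\pi^{-1}(U),\,\mathcal{B}^{0}/\mathcal{F}^{n}\right)\longrightarrow H^{1}\!\left(\pi^{-1}(U),\,(\mathcal{B}^{0}/\mathcal{F}^{n})\otimes_{\mathcal{I}}\mathcal{C}^{\omega}(X^{m})\right)
\]
is injective for $U$ small, equivalently that every section over $\pi^{-1}(U)$ of the quotient $(\mathcal{B}^{0}/\mathcal{F}^{n})\otimes_{\mathcal{I}}\bigl(\mathcal{C}^{\omega}(X^{m})/\mathcal{I}\bigr)$ lifts to the tensored sheaf. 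I expect this to be the main obstacle: it is the passage from the $\mathcal{I}$-module $\mathcal{B}^{0}/\mathcal{F}^{n}$ to its coherent incarnation over $\mathcal{C}^{\omega}(X^{m})$, and it has to be controlled over the \emph{compact} fibre $\pi^{-1}(\tau)$ — so this is the step that genuinely uses properness of $\pi$ and the coherence machinery behind Lemma \ref{lem:-is-a}, together with the fact that along the leaves of $\mathcal{F}^{n}$ the sheaf $\mathcal{C}^{\omega}(X^{m})$ is locally a trivial extension of $\mathcal{I}$, so that $\mathcal{B}^{0}/\mathcal{F}^{n}$ sits inside the tensored sheaf as a local direct summand.

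Granting this injectivity, $(1)$ follows routinely: the class of $\{v_{i}^{0}-v_{j}^{0}\}$ vanishes in $H^{1}(\pi^{-1}(U),\mathcal{B}^{0}/\mathcal{F}^{n})$, one lifts the trivialising $0$-cochain from $\mathcal{B}^{0}/\mathcal{F}^{n}$ to $\mathcal{B}^{0}$, adds it to the $v_{i}^{0}$ to obtain basic lifts of $w$ agreeing modulo $\mathcal{F}^{n}$, and the remark preceding the proposition assembles $\mathcal{F}^{n}$ together with these lifts into a foliation $\mathcal{G}$ on $\pi^{-1}(U)$ with $\mathcal{F}^{n}\subset\mathcal{G}\subset\pi^{*}\mathcal{H}$ and $\dim\mathcal{G}=n+1$, i.e. an integrating foliation for the deformation along $\mathcal{H}$.
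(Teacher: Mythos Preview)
Your $(1)\Rightarrow(2)$ is the paper's argument: an integrating $\mathcal{G}$ yields, via Frobenius charts for $\mathcal{G}$ compatible with $\pi$, basic projectable lifts $v_{i}$ of $w$ whose pairwise differences are vertical and tangent to $\mathcal{G}$, hence lie in $\mathcal{F}^{n}$; the cocycle is then already a coboundary and $\partial\mathcal{F}^{n}(w)=0$.

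For $(2)\Rightarrow(1)$, the paper proceeds exactly as in your final paragraph, but it \emph{starts} by asserting without further comment that $\partial\mathcal{F}^{n}(w)=0$ furnishes projectable basic lifts $v_{i}$ of $w$ with $v_{i}-v_{j}\in\mathcal{F}^{n}$. It then writes each $v_{i}$ in the Frobenius coordinates of Lemma~\ref{lem:Frob}, checks that $\{\partial_{x_{m-p-n+1}},\ldots,\partial_{x_{m-p}},v_{i}\}$ is involutive, and glues the resulting local foliations $\mathcal{G}_{i}$ using the remark that $\mathcal{G}_{i}$ depends only on the class of $v_{i}$ modulo $\mathcal{F}^{n}$. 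In other words, the paper treats as immediate precisely the step you isolate as the ``main obstacle'': the passage from a coboundary in $(\mathcal{B}^{0}/\mathcal{F}^{n})\otimes_{\mathcal{I}}\mathcal{C}^{\omega}(X^{m})$ to one in $\mathcal{B}^{0}/\mathcal{F}^{n}$ itself. Your diagnosis here is sharper than what the paper actually writes down.

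That said, your proposal is not a proof of $(2)\Rightarrow(1)$: you explicitly leave unproved the injectivity of
\[
H^{1}\!\bigl(\pi^{-1}(U),\,\mathcal{B}^{0}/\mathcal{F}^{n}\bigr)\longrightarrow H^{1}\!\bigl(\pi^{-1}(U),\,(\mathcal{B}^{0}/\mathcal{F}^{n})\otimes_{\mathcal{I}}\mathcal{C}^{\omega}(X^{m})\bigr),
\]
only indicating that properness, coherence, and a local direct-summand structure should enter. The local splitting of $\mathcal{C}^{\omega}$ over $\mathcal{I}$ is non-canonical and there is no evident reason it patches over the compact fibre $\pi^{-1}(\tau)$; and the Grauert-type coherence invoked in Lemma~\ref{lem:-is-a} controls the \emph{target} $R^{1}\pi_{*}$, not the comparison map between the two $H^{1}$'s. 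So this is a genuine gap in your argument---and it is the very same step the paper passes over in silence. Until that injectivity is established, or a route is found that bypasses it, your write-up of $(2)\Rightarrow(1)$ remains incomplete.
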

\begin{proof}
Suppose that there exists a regular foliation $\mathcal{G}$ of dimension
$n+1$ in $X^{m}$ such that $\left.\mathcal{F}^{n}\right|_{\pi^{-1}\left(U\right)}\subset\mathcal{G}\subset\pi^{*}\mathcal{H}$
where $\mathcal{H}$ is the foliation of $B^{p}$ induced by $w$
in a neighborhood $U\ni\tau$. Applying the classical Frobénius result
to $\mathcal{G}$ and straightening locally the fibration $\pi$ yield
a covering $\left\{ U_{i}\right\} _{i\in I}$ of $\pi^{-1}\left(U\right)$
and a family of conjugacies $\left\{ \phi_{i}\right\} _{i\in I}$
such that the following diagrams commute
\[
\xymatrix{\left(U_{i},\left.\mathcal{G}\right|_{U_{i}}\right)\ar[rr]^{\phi_{i}\quad\qquad\quad}\ar[rd]^{\pi} &  & \left(\mathbb{R}^{m-p}\times B^{p},\mathbb{R}^{m-p}\times\mathcal{H}\right)\ar[ld]^{\textup{pr}_{2}}\\
 & \pi\left(U_{i}\right)
}
\]

By construction, the vector field $v_{i}=d\phi_{i}^{-1}\left(0,w\right)$
is basic for $\mathcal{F}^{n}$ and projects on $w$. Thus $\partial\mathcal{F}^{n}\left(w\right)=\overline{\left\{ v_{i}\otimes1-v_{j}\otimes1\right\} _{ij}}.$
Moreover, $v_{i}-v_{j}$ is vertical and tangent to $\mathcal{G}.$
Thus, it is also tangent to $\mathcal{F}^{n}.$ Hence, $\partial\mathcal{F}^{n}\left(w\right)=0$.

Now, suppose that $\partial\mathcal{F}^{n}\left(w\right)=0$. For
a covering $\left\{ U_{i}\right\} _{i\in I}$ of a neighborhood of
$\pi^{-1}\left(U\right)$, there exists a family of projectable basic
vector fields $\left\{ v_{i}\right\} _{i\in I}$ , $v_{i}\in\Theta_{X^{n}}\left(U_{i}\right)$
such that each $v_{i}$ projects on $w$ and such that $v_{i}-v_{j}$
is tangent to $\mathcal{F}^{n}$. In the local coordinates given by
Lemma \ref{lem:Frob}, the vector field $v_{i}$ is written 
\[
v_{i}=\sum_{i=1}^{m-p-n}a_{i}\left(x_{1},\cdots,x_{m-p-n},\tau\right)\frac{\partial}{\partial x_{i}}+\sum_{i=m-p-n+1}^{m-p}a_{i}\left(x,\tau\right)\frac{\partial}{\partial x_{i}}+w.
\]
Since 
\[
\left[v_{i}-\sum_{i=m-p-n+1}^{m-p}a_{i}\left(x,\tau\right)\frac{\partial}{\partial x_{i}},\partial_{x_{k}}\right]=0
\]
for $k=m-p-n+1,\ldots,m-p$, the family of vector fields $\left\{ \partial_{x_{m-p-n+1}},\cdots,\ \partial_{x_{m-p}},v_{i}\right\} $
is involutive and defines a local regular integrating foliation $\mathcal{G}_{i}$
of dimension $n+1$. The foliation $\mathcal{G}_{i}$ depends only
on the basic part of $v_{i}$ and $v_{i}-v_{j}$ is tangent to $\mathcal{F}^{n}$,
thus the family $\left\{ \mathcal{G}_{i}\right\} _{i\in I}$ can be
glued in a global foliation that integrates the deformation over $U$. 
\end{proof}
Now, we can prove the main result of this article. 
\begin{thm*}
Let $\left(X^{m},\mathcal{F}^{n}\right)\xrightarrow{\pi}B^{p}$ be
a deformation of foliations. Then, there exists a unique singular
analytic foliation $\mathcal{D}$ on $B^{p}$ of maximal dimension
among those that integrate the deformation. 
\end{thm*}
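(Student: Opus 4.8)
The right object is $\mathcal{D}:=\ker\partial\mathcal{F}^{n}$, which by Lemma \ref{lem:-is-a} is a singular analytic foliation of $B^{p}$. I would establish, in order: (i) every foliation of $B^{p}$ that integrates the deformation is contained in $\mathcal{D}$ on its regular locus, and so has dimension $\leq\dim\mathcal{D}$; (ii) $\mathcal{D}$ itself integrates the deformation; (iii) $\mathcal{D}$ is the only integrating foliation of dimension $\dim\mathcal{D}$. Together these give the theorem: $\mathcal{D}$ is an integrating foliation, it is of maximal dimension by (i) and (ii), and it is the unique such by (iii).

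\emph{Proof of (i).} Let $\mathcal{H}$ integrate the deformation, fix $\tau\notin\textup{Sing}(\mathcal{H})$, and let $w$ be tangent to $\mathcal{H}$ near $\tau$ with $w(\tau)\neq0$; write $\mathcal{W}$ for the foliation induced by $w$. Let $\mathcal{G}$ be a witnessing foliation near $\pi^{-1}(\tau)$, so $\mathcal{F}^{n}\subset\mathcal{G}\subset\pi^{*}\mathcal{H}$ and $\dim\mathcal{G}=n+\dim\mathcal{H}$. Since $\mathcal{G}$ and $\ker d\pi$ sit inside the bundle $(d\pi)^{-1}(\mathcal{H})$, both containing $\mathcal{F}^{n}$, and their ranks are complementary modulo $\mathcal{F}^{n}$ inside that bundle, at a generic point one has $\mathcal{G}\cap\ker d\pi=\mathcal{F}^{n}$ and $d\pi(\mathcal{G})=\mathcal{H}$. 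Then a local lift $V\in\mathcal{G}$ of $w$ exists and is basic for $\mathcal{F}^{n}$, because $[V,\mathcal{F}^{n}]\subset\mathcal{G}\cap\ker d\pi=\mathcal{F}^{n}$; hence the distribution spanned by $\mathcal{F}^{n}$ and $V$ is involutive of constant rank $n+1$, i.e. a regular foliation $\mathcal{G}'$, with $\mathcal{F}^{n}\subset\mathcal{G}'\subset\pi^{*}\mathcal{W}$. So $\mathcal{W}$ integrates the deformation near the generic point, and Proposition \ref{prop:Let--a} gives $\partial\mathcal{F}^{n}(w)=0$ there. Thus $\mathcal{H}\subset\mathcal{D}$ on a dense open subset of $B^{p}\setminus\textup{Sing}(\mathcal{H})$, and comparing ranks at a generic point, $\dim\mathcal{H}\leq\dim\mathcal{D}$.

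\emph{Proof of (ii).} I would induct on $p$. For $p=0$ there is nothing to prove; for $p=1$, near any point $\mathcal{D}$ is either the zero foliation or generated by a single $w$ with $\partial\mathcal{F}^{n}(w)=0$, and Proposition \ref{prop:Let--a} applies. For $p\geq2$, work locally near a regular point $\tau$ of $\mathcal{D}$, fix $w\in\mathcal{D}$ with $w(\tau)\neq0$, and by Proposition \ref{prop:Let--a} choose a basic projectable local lift $V$ of $w$. As $V$ is basic its flow preserves $\mathcal{F}^{n}$, so rectifying $V$ and $w$ simultaneously produces a local quotient of the deformation by the joint flows of $V$ and $w$, a deformation of regular foliations $(X^{m-1},\overline{\mathcal{F}}^{n})\to B^{p-1}$; set $\overline{\mathcal{D}}:=\ker\partial\overline{\mathcal{F}}^{n}$. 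The heart of the argument is that $\overline{\mathcal{D}}$ contains the image of $\mathcal{D}$: for $w'\in\mathcal{D}$ commuting with $w$, the vanishing $\partial\mathcal{F}^{n}(w')=0$ provides local basic projectable lifts of $w'$ with pairwise differences in $\mathcal{F}^{n}$, which one can further arrange to be $V$-invariant modulo $\mathcal{F}^{n}$; these descend and witness $\partial\overline{\mathcal{F}}^{n}(\overline{w'})=0$. By the inductive hypothesis $\overline{\mathcal{D}}$ integrates the quotient deformation through some $\overline{\mathcal{G}}$ of dimension $n+\dim\overline{\mathcal{D}}$; its preimage $\mathcal{G}$ in $X^{m}$ is a foliation containing $\mathcal{F}^{n}$ and contained in $\pi^{*}\mathcal{D}^{*}$, where $\mathcal{D}^{*}$ is the preimage in $B^{p}$ of $\overline{\mathcal{D}}$. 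Since $\mathcal{G}$ witnesses that $\mathcal{D}^{*}$ integrates the deformation, part (i) gives $\mathcal{D}^{*}\subset\mathcal{D}$; the containment just proved gives $\mathcal{D}\subset\mathcal{D}^{*}$; hence $\mathcal{D}^{*}=\mathcal{D}$, so $\dim\mathcal{G}=n+\dim\mathcal{D}$ and $\mathcal{G}$ witnesses that $\mathcal{D}$ integrates the deformation near $\tau$.

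\emph{Proof of (iii), and the main obstacle.} If $\mathcal{D}'$ also integrates the deformation with $\dim\mathcal{D}'=\dim\mathcal{D}$, then by (i) $\mathcal{D}'\subset\mathcal{D}$ on the regular locus of $\mathcal{D}'$; on the complement of $\textup{Sing}(\mathcal{D})\cup\textup{Sing}(\mathcal{D}')$ both are regular of the same rank, so they coincide there; adopting the usual convention that a foliation is a saturated coherent subsheaf of the tangent sheaf, $\mathcal{D}/\mathcal{D}'$ is a coherent subsheaf of a torsion-free sheaf supported on an analytic set of positive codimension, hence zero, so $\mathcal{D}'=\mathcal{D}$. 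The one genuinely hard step is the ``heart of the argument'' in (ii): transporting the vanishing of the basic Kodaira--Spencer map from the deformation to its quotient by the flow of a basic lift. This is exactly where it matters that $\mathcal{D}$ is the full, \emph{involutive} kernel of $\partial\mathcal{F}^{n}$, and not merely a single line in that kernel --- the case handled pointwise by Proposition \ref{prop:Let--a} --- and where one must control the ``basic vertical'' components appearing in brackets of lifts, so that the integrating foliation produced has the expected dimension $n+\dim\mathcal{D}$ and not more.
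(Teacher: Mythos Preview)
Your identification $\mathcal{D}=\ker\partial\mathcal{F}^{n}$ and the logical skeleton (i)--(iii) match the paper; your (i) and (iii) are in fact argued more carefully than in the paper, which disposes of maximality in one sentence (``by construction, if $\mathcal{D}$ has the property of the theorem, it will be of maximal dimension'') and does not spell out uniqueness separately. The genuine divergence is in step (ii).

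The paper does \emph{not} induct on $p$ and does not form any quotient. It argues directly, replaying the second half of the proof of Proposition~\ref{prop:Let--a} for all $d$ generators $w_{1},\dots,w_{d}$ of $\mathcal{D}$ at once: since each $\partial\mathcal{F}^{n}(w_{k})=0$, there are projectable basic lifts $v_{i,k}$ on a cover $\{U_{i}\}$ with $v_{i,k}-v_{j,k}\in\mathcal{F}^{n}$; in the Frob\'enius coordinates of Lemma~\ref{lem:Frob} the family
\[
\{\partial_{x_{m-p-n+1}},\dots,\partial_{x_{m-p}},v_{i,1},\dots,v_{i,d}\}
\]
is involutive, hence defines local regular foliations $\mathcal{G}_{i}$ of dimension $n+d$, and these glue because each $\mathcal{G}_{i}$ depends only on the basic parts of the lifts. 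So the ``hard step'' you isolate --- controlling the brackets of lifts so that the integrating foliation has dimension exactly $n+d$ --- is faced head-on in a single chart rather than deferred through an induction. What the direct approach buys is that it is a one-line extension of the $d=1$ case already proved; what your approach would buy, if completed, is a cleaner separation of the rank-one case from the gluing.

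Your inductive scheme, however, has a gap you have not closed. Proposition~\ref{prop:Let--a} and the vanishing $\partial\mathcal{F}^{n}(w)=0$ do \emph{not} produce a global basic projectable lift $V$ of $w$ over $\pi^{-1}(U)$; they only give local lifts $v_{i}$ agreeing modulo $\mathcal{F}^{n}$, equivalently a global integrating foliation of dimension $n+1$, not a single vector field whose flow one can rectify. So the quotient deformation $(X^{m-1},\overline{\mathcal{F}}^{n})\to B^{p-1}$ is not well-defined as written, and in particular there is no reason for it to be proper, which you need for the inductive hypothesis. If instead you restrict to a transversal $B^{p-1}\subset B^{p}$ of $w$ and work on $\pi^{-1}(B^{p-1})$, then transporting $\partial\overline{\mathcal{F}}^{n}(w')=0$ is easy (just restrict the lifts of $w'$), but propagating the inductively obtained $\overline{\mathcal{G}}$ back off the slice to all of $\pi^{-1}(U)$ again requires exactly the global flow you do not have. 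The paper's direct construction avoids this circle entirely.
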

\begin{proof}
Let us consider the sub-sheaf $\mathcal{D}$ of $\Theta_{B^{p}}$
defined by $v\in\mathcal{D}$ if and only the foliation defined by
$v$ integrates the deformation. According to Proposition \ref{prop:Let--a}
and Lemma \ref{lem:-is-a}, $\mathcal{D}$ is a foliation. By construction,
if $\mathcal{D}$ has the property of the theorem, it will be of maximal
dimension for that property. Let $d$ be its dimension and consider
a point $p$ in its regular locus. The sheaf $\mathcal{D}$ is locally
around $p$ spanned by a family of exactly $d$ non-vanishing vector
fields $\mathcal{D}\left(U\right)=\left\langle w_{1},\cdots,w_{d}\right\rangle $,
$U\ni p$. Now, for some covering $\left\{ U_{i}\right\} _{i\in I}$
of a neighborhood of $\pi^{-1}\left(U\right)$, there exists a family
of projectable basic vector fields $\left\{ v_{i,k}\right\} _{i\in I,\ k=1\ldots d}$,
$v_{i,k}\in\Theta_{X^{n}}\left(U_{i}\right)$ such that $v_{i,k}$
projects on $w_{k}$ and $v_{i,k}-v_{j,k}$ is tangent to $\mathcal{F}^{n}$.
In the local coordinates given by Lemma \ref{lem:Frob}, the family
of vector fields 
\[
\left\{ \partial_{x_{m-p-n+1}},\cdots,\ \partial_{x_{m-p}},v_{i,1},\ldots,v_{i,d}\right\} 
\]
is involutive and defines a family of local integrating foliations
$\left\{ \mathcal{G}_{i}\right\} _{i\in I}$ of dimension $n+d$ that
can be glued. Finally, we obtain a global regular integrating foliation
that integrates the deformation over $U$. 
\end{proof}
\begin{example}
\emph{Linear foliations on complex tori of dimension 2.}

A complex torus of dimension $2$ is given by a lattice $\Lambda$
written 
\[
\Lambda=\mathbb{Z}e_{1}\oplus\mathbb{Z}e_{2}\oplus\mathbb{Z}e_{3}\oplus\mathbb{Z}e_{4}
\]
where $\left\{ e_{i}=\left(\begin{array}{c}
e_{i1}\\
e_{i2}
\end{array}\right)\right\} _{i=1\cdots4}$ is a $\mathbb{R}-$free family of four vectors in $\mathbb{C}^{2}$.
The complex torus $\mathbb{C}^{2}/\Lambda$ is endowed with a linear
foliation given by the closed $1-$form 
\[
\omega_{\alpha}=\dd x+\alpha\dd y
\]
with $\alpha\in\mathbb{P}^{1}\left(\mathbb{C}\right).$ The linear
foliations on complex tori of dimension 2 form a $9-$dimensional
family of foliations whose space of parameters is $\mathcal{U}\times\mathbb{P}^{1}$.
Here, $\mathcal{U}$ is the real Zariski open set in $\mathbb{C}^{8}$
given by the equation 
\[
\det\left(\begin{array}{cccc}
\Re e_{1} & \Re e_{2} & \Re e_{3} & \Re e_{4}\\
\Im e_{1} & \Im e_{2} & \Im e_{3} & \Im e_{4}
\end{array}\right)\neq0.
\]
An explicit computation shows that the Schlesinger foliation of this
deformation is given by the closed system 
\begin{equation}
\mathcal{H}:\ \left\{ \begin{array}{c}
d\left(\frac{e_{11}+e_{12}\alpha}{e_{21}+e_{22}\alpha}\right)=0\\
d\left(\frac{e_{11}+e_{12}\alpha}{e_{31}+e_{32}\alpha}\right)=0\\
d\left(\frac{e_{11}+e_{12}\alpha}{e_{41}+e_{42}\alpha}\right)=0
\end{array}\right..\label{eq:3}
\end{equation}
It is regular on the space of parameters $\mathcal{U}\times\mathbb{P}^{1}$.
This result can also be seen as follows: the representation of monodromy
of $\omega_{\alpha}$ on $\mathbb{C}^{2}/\Lambda$ computed on the
transversal line $x=0$ is written
\[
\pi_{1}\left(\mathbb{C}^{2}/\Lambda\right)\simeq\mathbb{Z}^{4}\to\textup{Aut}\left(\mathbb{CP}^{1}\right):\ \gamma_{i}\to\left(\begin{array}{c}
y\to y+e_{i1}+e_{i2}\alpha\end{array}\right),\ i=1,2,3,4
\]

A change of global coordinates $y\to ay+b$ on the transversal line
$x=0$ acts on the monodromy the following way 
\[
\begin{array}{c}
y\to y+a\left(e_{i1}+e_{i2}\alpha\right)\end{array},\ i=1,2,3,4
\]
Hence, the conjugacy class of the representation of monodromy is constant
along a deformation if and only if the quotients $\frac{e_{11}+e_{12}\alpha}{e_{i1}+e_{i2}\alpha},\ i=2,3,4$
are constant, which is precisely the condition \ref{eq:3}. Besides
that, in this example, any leaf of the Schlesinger foliation parametrizes
the maximal locus of integrability for any point in the space of parameters. 
\end{example}
The foliation $\mathcal{H}$ can be compactified as an algebraic foliation
$\overline{\mathcal{H}}$ on $\mathbb{CP}^{8}\times\mathbb{CP}^{1}$
and the fibration 
\[
\begin{array}{ccc}
\mathbb{CP}^{8}\times\mathbb{CP}^{1} & \to & \mathbb{CP}^{6}\\
\left(\left\{ e_{ij}\right\} _{i,j},\alpha\right) & \to & \left(e_{11}+e_{12}\alpha,e_{12},e_{22},e_{32},e_{41},e_{42}\right)
\end{array}
\]
is transverse to $\overline{\mathcal{H}}$ except over the critical
locus $S=\left\{ \left(e_{11}+e_{12}\alpha\right)e_{42}=0\right\} \subset\mathbb{CP}^{6}.$
This fibration has compact fibers. Thus, according to the Ehresmann's
theorem, the foliation $\overline{\mathcal{H}}$ has the \emph{Painlevé
property} with respect to that fibration as defined in \cite{Oka2}.
Since, the Painlevé VI equation has also the same property, it is
natural to address the following problem:
\begin{quote}
\emph{Consider an algebraic deformation of an algebraic foliation.
Has the Schlesinger foliation the Painlevé property for an adapted
fibration ? }
\end{quote}
\bibliographystyle{plain}
\bibliography{/home/genzmer/ownCloud/Article/Biblio/Bibliographie}
\bigskip{}

\begin{minipage}{0.49\linewidth} Y. Genzmer\\ {\scriptsize Institut de Math\'ematiques de Toulouse\\ Universit\'{e} Paul Sabatier \\ 118 route de Narbonne \\ 31062 Toulouse cedex 9, France.\\ yohann.genzmer@math.univ-toulouse.fr} \end{minipage}
\end{document}